%

\documentclass[twoside,11pt]{amsart} \usepackage{epsfig}
\usepackage{latexsym} 
\usepackage{amsmath} \usepackage{amssymb}

 \keywords{ primes, twin primes, gaps, prime constellations, Eratosthenes sieve,
primorial numbers, Polignac's conjecture}

\subjclass{11N05, 11A41, 11A07}

\setlength{\parindent}{15pt}

 \setlength{\parskip}{1.5ex plus 0.5ex minus 0.3ex}
 \sloppy

\newtheorem{theorem}{Theorem}[section]
\newtheorem{lemma}[theorem]{Lemma}
\newtheorem{corollary}[theorem]{Corollary}
\newtheorem{remark}[theorem]{Remark}

\newdimen\epsfxsize
\newdimen\epsfysize

\newcommand {\gap}     {\makebox[0.075 in]{}}   
\newcommand {\st}      {\gap : \gap}   

\newcommand {\fto}     {\longrightarrow}

\newcommand {\set}[1]  {\left\{ {#1} \right\}}   
\newcommand {\ord}[1]  {{#1}^{\rm th}}

\newcommand{\primeprod}[1] {{#1}^\#}

\newcommand{\Z}     {{\mathbb Z}}

\newcommand{\Rat}[2]  {w_{{#2},{#1}}}  

\newcommand{\pgap}   {{\mathcal G}}

\newcommand{\lil}{\scriptstyle}

\begin{document}

\title{On Polignac's conjecture }

\date{15 Feb 2014}

\author{Fred B. Holt and Helgi Rudd}
\address{fbholt@uw.edu ;  4311-11th Ave NE \#500, Seattle, WA 98105;
48B York Place, Prahran, Australia 3181}

\begin{abstract}
A few years ago we identified a recursion that works directly with the gaps among
the generators in each stage of Eratosthenes sieve.  This recursion provides explicit
enumerations of sequences of gaps among the generators, which are known as 
constellations.

As the recursion proceeds, adjacent gaps within longer constellations are added together
to produce shorter constellations of the same sum.  These additions or closures correspond to
removing composite numbers that are divisible by the prime for that stage of Eratosthenes sieve.
Although we don't know where in the cycle of gaps a closure will occur, we can enumerate 
exactly how many copies of various constellations will survive each stage.

In this paper, we broaden our study of these systems of constellations of a fixed sum.  
By generalizing our methods, we are able to demonstrate that for every even number $2n$
the gap $g=2n$ occurs infinitely often through the stages of Eratosthenes sieve.
Moreover, we show that asymptotically the ratio of the number of 
gaps $g=2n$ to the number of gaps $g=2$ at each stage of Eratosthenes sieve
converges to the estimates made for gaps among primes by Hardy and Littlewood
in Conjecture B of their 1923 paper.

\end{abstract}

\maketitle

\section{Introduction}
We work with the prime numbers in ascending order, denoting the
$\ord{k}$ prime by $p_k$.  Accompanying the sequence of primes
is the sequence of gaps between consecutive primes.
We denote the gap between $p_k$ and $p_{k+1}$ by
$g_k=p_{k+1}-p_k.$
These sequences begin
$$
\begin{array}{rrrrrrc}
p_1=2, & p_2=3, & p_3=5, & p_4=7, & p_5=11, & p_6=13, & \ldots\\
g_1=1, & g_2=2, & g_3=2, & g_4=4, & g_5=2, & g_6=4, & \ldots
\end{array}
$$

A number $d$ is the {\em difference} between prime numbers if there are
two prime numbers, $p$ and $q$, such that $q-p=d$.  There are already
many interesting results and open questions about differences between
prime numbers; a seminal and inspirational work about differences
between primes is Hardy and Littlewood's 1923 paper \cite{HL}.

A number $g$ is a {\em gap} between prime numbers if it is the difference
between consecutive primes; that is, $p=p_i$ and $q=p_{i+1}$ and
$q-p=g$.
Differences of $2$ or $4$ are also gaps; so open questions
like the Twin Prime Conjecture, that there are an infinite number
of gaps $g_k=2$, can be formulated as questions about differences
as well.

{\em Polignac's conjecture.}  In 1849 de Polignac conjectured that 
for every $n >0$ the gap $g=2n$ occurs infinitely often among primes.

In this paper we provide supporting evidence for this conjecture by proving
that the analogue for Eratosthenes sieve is true.  We show
that for any $n>0$ the gap $g=2n$ occurs infinitely often in the stages of Eratosthenes sieve,
and we show that the ratio of occurrences of $g=2n$ to $g=2$, which ratio we denote
by $\Rat{1}{2n}$, asymptotically approaches the ratio implicit in Hardy and Littlewood's
Conjecture B \cite{HL}:
$$ \Rat{1}{2n}(\infty) = \prod_{q > 2, \gap q|n} \frac{q-1}{q-2}.$$

To accomplish this, we need to generalize the work in \cite{FBHgaps} and \cite{HRsmall}.
In those papers, we studied the cycle of gaps in each stage of Eratosthenes sieve,
denoting the corresponding cycle of gaps $\pgap(\primeprod{p_k})$.
Here we study the cycle of gaps $\pgap(N)$ among the generators in $\Z \bmod N$ for any $N$.

A {\em constellation among primes} \cite{Riesel} is a sequence of consecutive gaps
between prime numbers.  Let $s=c_1 c_2 \cdots c_j$ be a sequence of $j$
numbers.  Then $s$ is a constellation among primes if there exists a sequence of
$j+1$ consecutive prime numbers $p_{i_0} p_{i_0+1} \cdots p_{i_0+j}$ such
that for each $i=1,\ldots,j$, we have the gap $p_{i_0+i}-p_{i_0+i-1}=c_i$.  
In Eratosthenes sieve,
$s$ is a constellation if for some $p_k$ and some $i_0$ and all $i=1,\ldots,j$,
$c_i=g_{i_0+i}$ in $\pgap(\primeprod{p_k})$.  

For a constellation $s$, the {\em length} of $s$ is the number of gaps in $s$,
denoted $|s|$.
A {\em driving term} 
for a gap $g$ in $\pgap(\primeprod{p})$ is a constellation
whose gaps sum to $g$.  A driving term of length $1$ is the gap itself.

The power of the recursion on the cycle of gaps is seen in the following theorem,
which enables us to calculate the number of occurrences of a constellation $s$ through
successive stages of Eratosthenes sieve.

\begin{theorem}\label{CountThm}
(from \cite{FBHgaps,HRsmall})
Given a gap $g=2n$, let $n_{g,j}(p)$ be the number of driving terms for $g$ in 
$\pgap(\primeprod{p})$ of length $j$.
For every prime $p_k$ such that $g < 2p_{k+1}$,
\begin{equation}\label{NgjEq}
n_{g,j}(p_{k+1}) = (p_{k+1}-j-1)\cdot n_{g,j}(p_k) + j \cdot n_{g,j+1}(p_k).
\end{equation}
\end{theorem}

The challenge in applying this approach to Polignac's conjecture is that the condition
$g < 2p_{k+1}$ in Theorem~\ref{CountThm}
requires us to go far into the stages of Eratosthenes sieve,
before we can get exact counts for the driving terms of $g$ of each length $j$,
the $ n_{g,j}(p_k)$ in Equation~\ref{NgjEq}.

\begin{figure}[t]
\centering
\includegraphics[width=5in]{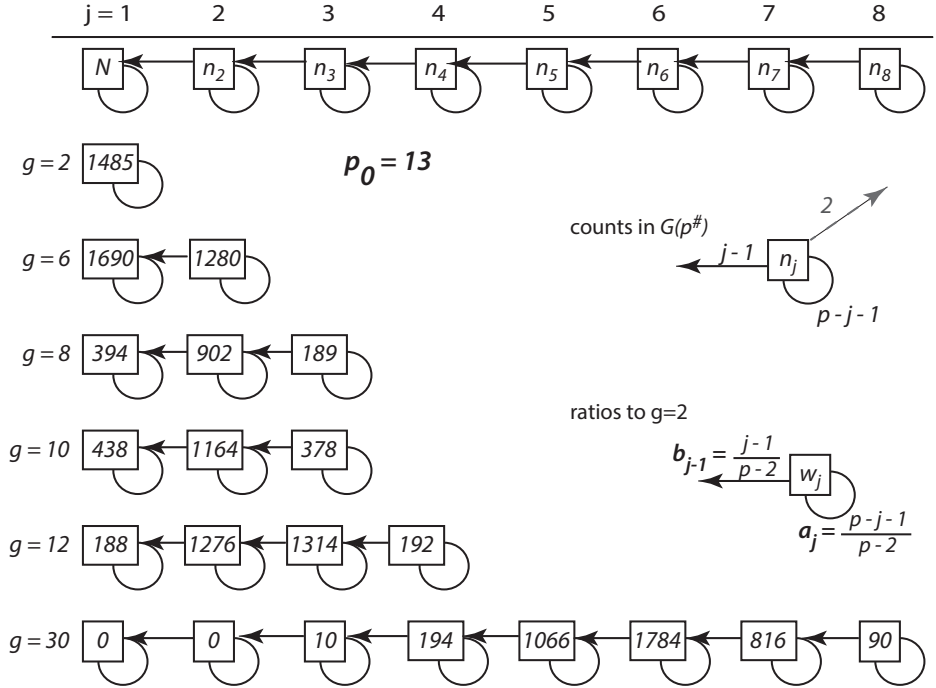}
\caption{\label{SystemFig} This figure illustrates the dynamic system of
Theorem~\ref{CountThm} through stages of the recursion for
$\pgap(\primeprod{p})$, using just the counts of gaps and their driving terms.
The coefficients of the system at each stage of the recursion
are independent of the specific gap and its driving terms.  
Below the diagram for the system, we record the initial conditions
for a set of gaps at $p_0=13$.  From this information we can derive the recursive count
for primes $q > p_0$.  Since the raw counts are superexponential, we take
the ratio of the count for each constellation to the count for $g=2$. }
\end{figure}

Following  \cite{HRsmall}, once we have the exact counts $n_{g,j}(p_k)$ 
for a prime $p_k$ such that $g < 2p_{k+1}$, 
we can set up a dynamic system representing the counts
through all subsequent stages of Eratosthenes sieve.  Since the gap $g=2$ has no driving
terms other than the gap itself, 
we take the ratios of the other gaps to the number of $2$'s at each stage of the sieve.
So instead of the raw counts of Equation \ref{NgjEq}, we use the ratios
$\Rat{j}{g}(p) = n_{g,j}(p) / n_{2,1}(p) $:
\begin{equation*}
\Rat{j}{g}(p_{k+1})  =  \frac{p_{k+1}-j-1}{p_{k+1}-2} \Rat{j}{g}(p_k)
     + \frac{j}{p_{k+1}-2} \Rat{j+1}{g}(p_k).
\end{equation*}

If we assemble the $\Rat{j}{g}(p_k)$ into a vector, we get a matrix equation
with a remarkably simple eigenstructure, from which we derive the following corollary
to Theorem~\ref{CountThm}.

\begin{corollary}\label{RatCor} 
(from \cite{HRsmall}) 
For any gap $g$ with initial ratios $\Rat{j}{g}(p_k)$, such that $g < 2p_{k+1}$,
the ratio of occurrences of this gap $g$
to occurrences of the gap $2$ in $\pgap(\primeprod{p})$ as $p \fto \infty$ converges to
the sum of these initial ratios across all the driving terms for this gap:
\begin{equation}\label{RatEq}
\Rat{1}{g}(\infty) =  \sum_j  \Rat{j}{g}(p_k).
\end{equation}
\end{corollary}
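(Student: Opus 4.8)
The plan is to read the displayed recursion for $\Rat{j}{g}$ as a linear map and to exploit the fact that the maps for the successive stages all share one stage-independent eigenbasis. Fix a prime $p_k$ with $g<2p_{k+1}$; then $g<2p_{i+1}$ for all $i\ge k$, so the recursion (a restatement of Theorem~\ref{CountThm}) governs the step from $p_i$ to $p_{i+1}$ for every $i\ge k$. Since every gap in $\pgap(\primeprod{p})$ with $p>2$ is at least $2$, a driving term for $g=2n$ has at most $n$ gaps; let $J\le n$ be the largest length that occurs and assemble the finite column vector $r(p)=\bigl(\Rat{1}{g}(p),\Rat{2}{g}(p),\ldots,\Rat{J}{g}(p)\bigr)^{T}$. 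The recursion then reads $r(p_{i+1})=M_{i+1}\,r(p_i)$ with $M_{i+1}=I-\tfrac{1}{p_{i+1}-2}A$, where $A$ is the \emph{fixed} upper bidiagonal matrix given by $A_{j,j}=j-1$ and $A_{j,j+1}=-j$.

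Next I would record the elementary spectral data of $A$. Being triangular, $A$ has the distinct eigenvalues $0,1,\ldots,J-1$, so it is diagonalizable with a stage-independent eigenbasis $v^{(1)},\ldots,v^{(J)}$, where $v^{(j)}$ belongs to the eigenvalue $j-1$; consequently $M_i v^{(j)}=\bigl(1-\tfrac{j-1}{p_i-2}\bigr)v^{(j)}$ for every $i>k$. A one-line back-substitution gives the right eigenvector $v^{(1)}=e_1=(1,0,\ldots,0)^{T}$ for the eigenvalue $0$, and the matching left eigenvector is the all-ones vector $u=(1,1,\ldots,1)$. Writing $r(p_k)=\sum_j c_j v^{(j)}$ and using orthogonality of left and right eigenvectors for distinct eigenvalues, we get $c_1=u^{T}r(p_k)/u^{T}v^{(1)}=\sum_j\Rat{j}{g}(p_k)$, which is exactly the sum appearing in the claim.

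Iterating from $p_k$ to $p_m$ then gives
\begin{equation*}
r(p_m)=\Bigl(\prod_{i=k+1}^{m}M_i\Bigr)r(p_k)=\sum_{j=1}^{J}c_j\Bigl(\prod_{i=k+1}^{m}\bigl(1-\tfrac{j-1}{p_i-2}\bigr)\Bigr)v^{(j)} .
\end{equation*}
For $j=1$ the inner product is identically $1$, so that mode contributes the constant vector $c_1 e_1$. For $j\ge 2$ we have $j-1\ge 1$, and $g=2n<2p_{k+1}$ forces $p_{k+1}\ge n+1\ge j+1$, hence $p_i-2\ge j-1$ and each factor $1-\tfrac{j-1}{p_i-2}$ lies in $[0,1)$; since $\sum_p 1/p$ diverges, $\prod_{i=k+1}^{m}\bigl(1-\tfrac{j-1}{p_i-2}\bigr)\to 0$ as $m\to\infty$. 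Thus $r(p_m)\to c_1 e_1$; reading off the first coordinate gives $\Rat{1}{g}(\infty)=c_1=\sum_j\Rat{j}{g}(p_k)$, and the remaining coordinates show $\Rat{j}{g}(p_m)\to 0$ for $j\ge 2$.

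The one step that I expect to require genuine care is the first: observing that every stage matrix $M_i$ is a degree-one polynomial in the single fixed matrix $A$, so that the $M_i$ commute and diagonalize simultaneously — this is the simple eigenstructure referred to just before the statement. Everything afterwards is routine: two eigenvectors of a bidiagonal matrix and the divergence of $\sum_p 1/p$. Should presenting the simultaneous diagonalization feel heavy, a self-contained substitute is to verify by direct computation that the row sum $\sum_j\Rat{j}{g}(p)$ is invariant under the recursion, and then to show by downward induction on $j$ — with base case $\Rat{J}{g}$, which satisfies a purely multiplicative recursion — that $\Rat{j}{g}(p)\to 0$ for each $j\ge 2$; together these force $\Rat{1}{g}(p)\to\sum_j\Rat{j}{g}(p_k)$.
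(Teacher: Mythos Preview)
Your proposal is correct and is precisely the eigenstructure argument the paper alludes to (and defers to \cite{HRsmall}) in the sentence immediately preceding the corollary: writing each stage map as $M_i=I-\tfrac{1}{p_i-2}A$ for the fixed bidiagonal $A$, diagonalizing $A$, and using that the all-ones left eigenvector picks out $c_1=\sum_j \Rat{j}{g}(p_k)$ while the higher modes decay via $\prod_i(1-\tfrac{j-1}{p_i-2})\to 0$; the paper's later remark that convergence is governed by $a_2^k=\prod \tfrac{q-3}{q-2}$ is exactly your $j=2$ factor.
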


\subsection{Approach toward Polignac's conjecture.}
To establish an equivalent of Polignac's conjecture for Eratosthenes sieve, we show that for every
$n > 1$ the gap $g=2n$ does occur at some stage of the sieve and that as $p \fto \infty$
$$ \Rat{1}{g}(\infty) = \prod_{q>2, \; q|n} \frac{q-1}{q-2}.$$

To obtain this result, we first consider $\Z \bmod Q$ and its cycle of gaps $\pgap(Q)$,
in which $Q$ is the product of the prime divisors of $2n$.
We then bring this back into Eratosthenes sieve by filling in the primes
missing from $Q$ to obtain a primorial $\primeprod{p}$.

Once we are working with $\pgap(\primeprod{p})$, we are poised to apply
Theorem~\ref{CountThm} and Corollary~\ref{RatCor}.  However, the condition
$g < 2p_{k+1}$ could still require us to work with later stages of the sieve.
We are able to show that the conclusion of Corollary~\ref{RatCor} holds
under the construction we are using.

\section{The cycle of gaps among the generators in $\Z \bmod N$}
Let $\pgap(N)$ denote the cycle of gaps among the generators in $\Z \bmod N$, with
the first gap being that between $1$ and the next generator.  There are
$\phi(N)$ gaps in $\pgap(N)$ that sum to $N$.

There is a one-to-one correspondence between generators of $\Z \bmod N$ and the gaps in 
$\pgap(N)$.  Let
$$\pgap(N) = g_1 \; g_2 \; \ldots g_{\phi(N)}.$$
Then for $k < \phi(N)$, $g_k$ corresponds to the generator $\gamma = 1+\sum_{j=1}^{k} g_j$, and since
$\sum_{j=1}^{\phi(N)} = N$, the generator $1$ corresponds to $g_{\phi(N)}$.  Moreover, since
$1$ and $N-1$ are always generators, $g_{\phi(N)}=2$.  For any generator $\gamma$, $N-\gamma$ 
is also a generator, which implies that except for the final $2$, $\pgap(N)$ is symmetric.

In our previous work \cite{FBHgaps, HRest, HRsmall}, we focused on Eratosthenes sieve,
in which $N=\primeprod{p}$, the primorials.  For reference 
we provide a few base cases, since we will develop $\pgap(N)$ by building the cycle of gaps
via one prime factor of $N$ at a time.

\begin{remark}
\begin{enumerate}
\item For any prime number $p$, 
$$\pgap(p) = \underbrace{1\cdots1}_{p-2} 2$$
\item $\pgap(\primeprod{5}) = \pgap(30) = 64242462$.
\end{enumerate}
\end{remark}

As a convention, we write the cycles with the first gap being from $1$ to the next generator.  For
a prime $p$, every number is a generator in $\Z \bmod p$ except $p$ itself.
The last gap of $2$ is the gap from $p-1$ past $p$ (or $0$) around to $1$.
With $\pgap(p)$ as a starting point, we can build $\pgap(N)$ for any $N$ by introducing 
one prime factor at a time.

\begin{lemma}\label{LemmaCycle}
Given $\pgap(N)$, for a prime $q$ we construct $\pgap(qN)$ as follows:
\begin{enumerate}
\item[a)] if $q | N$, then we concatenate $q$ copies of $N$,
$$ \pgap(qN) = \underbrace{\pgap(N) \cdots \pgap(N)}_{q \gap {\rm copies}}$$
\item[b)] if $q \not| N$, then we build $\pgap(qN)$ in three steps:
\begin{enumerate}
\item[R1] Concatenate $q$ copies of $\pgap(N)$;
\item[R2] Close at $q$;
\item[R3] Close as indicated by the element-wise product $q * \pgap(N)$.
\end{enumerate}
\end{enumerate}
\end{lemma}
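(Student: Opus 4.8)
The plan is to work with the bijection between generators of $\Z \bmod M$ and gaps in $\pgap(M)$, and to track how this set of generators changes as we pass from $N$ to $qN$. The key observation is that $\gamma$ is a generator of $\Z \bmod qN$ if and only if $\gcd(\gamma, qN) = 1$, which splits into the condition $\gcd(\gamma, N) = 1$ together with $\gcd(\gamma, q) = 1$. So the generators of $\Z \bmod qN$ are obtained from the $q$ consecutive ``blocks'' $[1, N], [N+1, 2N], \ldots, [(q-1)N+1, qN]$, keeping in each block exactly those residues that reduce mod $N$ to a generator of $\Z \bmod N$ \emph{and} are not divisible by $q$.

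For part~a), when $q \mid N$ the extra condition $\gcd(\gamma, q) = 1$ is already implied by $\gcd(\gamma, N) = 1$, so nothing is removed: every block contributes a full copy of the generator pattern of $\Z \bmod N$, and hence $\pgap(qN)$ is just $q$ concatenated copies of $\pgap(N)$. For part~b), when $q \nmid N$ we first note that concatenating $q$ copies of $\pgap(N)$ (step R1) enumerates exactly the gaps among the set $\{\gamma : \gcd(\gamma, N) = 1, \ 1 \le \gamma \le qN\}$ — this uses that $\sum \pgap(N) = N$ so the blocks line up, and that $\gcd(\gamma + N, N) = \gcd(\gamma, N)$. It remains to delete, from this larger set, those $\gamma$ with $q \mid \gamma$; deleting a generator merges its gap with the preceding one, which is the operation the paper calls a ``closure.'' By the Chinese Remainder Theorem, among the $q\phi(N)$ surviving residues exactly $\phi(N)$ satisfy $q \mid \gamma$, so $\phi(N)$ closures occur, matching $\phi(qN) = (q-1)\phi(N)$. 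The content of steps R2 and R3 is to locate \emph{which} gaps absorb these closures: one is the multiple of $q$ nearest the wrap-around point $qN$ (equivalently, near $0$ and $1$), which is ``closing at $q$'' in step R2 (here I would check the indexing carefully, since $qN$ itself reduces to $0$); the remaining $\phi(N) - 1$ multiples of $q$ in range are pinned down by the element-wise product $q * \pgap(N)$, whose partial sums locate the positions $q, 2q, \ldots$ of the multiples of $q$ relative to the generator structure inherited from $\pgap(N)$. I would verify the $q*\pgap(N)$ bookkeeping by a small explicit example, e.g.\ recovering $\pgap(30) = 64242462$ from $\pgap(6) = 42$ with $q = 5$.

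The main obstacle is step R3: making precise the claim that the element-wise product $q * \pgap(N)$ correctly indexes the closure locations, including the interaction with the wrap-around and with the final gap of $2$, and confirming that no two closures fall on the same gap (so that each closure genuinely merges two distinct gaps rather than three or more at once). This is essentially a careful counting argument about where the residues $q, 2q, \ldots, (\phi(N)-1)q$ sit among the generators of $\Z \bmod N$ scaled up into $[1, qN]$, and I expect it to be the part that requires the most attention to off-by-one details; everything else follows cleanly from the CRT characterization of generators and the generator-to-gap correspondence stated at the start of this section.
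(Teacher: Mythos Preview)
Your overall approach matches the paper's: characterize generators of $\Z \bmod qN$ via $\gcd$, observe that part a) follows because $q \mid N$ makes the extra condition $q \nmid \gamma$ redundant, and for part b) build the candidate set by concatenation (R1) and then remove the $\phi(N)$ multiples of $q$ that survive coprimality with $N$.

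However, you have misread step R2. ``Close at $q$'' means close at the integer $q$ itself: since $q \nmid N$ we have $\gcd(q,N)=1$, so $q$ sits among the candidate generators in the first concatenated copy of $\pgap(N)$, and that is where the first closure occurs. It has nothing to do with the wrap-around near $qN$. Your description of R3 is also slightly off. The element-wise product $q * \pgap(N)$ does not index all multiples $q, 2q, 3q, \ldots$; rather, the multiples of $q$ still present after R1 are exactly $q\gamma$ for $\gamma$ a generator of $\Z \bmod N$, and consecutive such multiples differ by $q(\gamma_{i+1}-\gamma_i)=q g_i$. So starting from the closure at $q = q\cdot 1$ in R2, the running sums of $q g_1, q g_2, \ldots$ carry you from one closure location to the next --- this is the content of $q*\pgap(N)$. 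This observation also resolves your worry about two closures landing on the same gap: distinct generators $\gamma_i$ give distinct candidates $q\gamma_i$, so each closure removes a single candidate and merges exactly two adjacent gaps.
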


\begin{proof}
A number $\gamma$ in $\Z \bmod N$ is a generator iff $\gcd(\gamma,N)=1$. 
\begin{itemize}
\item[a)] 
Assume $q|N$.  Since $\gcd(\gamma,N)=1$, we know that $q\not| \gamma$.
For $j=0,1,\ldots,q-1$, we have 
$$\gcd(\gamma+jN, qN)= \gcd(\gamma,qN) = \gcd(\gamma,N)=1.$$
Thus $\gcd(\gamma,N)=1$ iff $\gcd(\gamma+jN,qN)=1$, and so the generators of $\Z \bmod qN$ have
the form $\gamma+jN$, and the gaps take the indicated form.
\item[b)] 
If $q \not| N$ then we first create a set of candidate generators for $\Z \bmod qN$, by
considering the set 
$$\set{\gamma+jN \st \gcd(\gamma,N)=1, \gap j=0,\ldots, q-1}.$$
For gaps, this is the equivalent of step R1, concatenating $q$ copies of $\pgap(N)$.
The only prime divisor we have not accounted for is $q$; if $\gcd(\gamma+jN,q)=1$, then this
candidate $\gamma+jN$ is a generator of $\Z \bmod qN$.  So we have to remove $q$ and its
multiples from among the candidates.

When we remove a multiple of $q$ as a candidate, we add together the gaps on each side of this multiple.
We call this {\em closing the gaps} at this multiple.

We first close the gaps at $q$ itself.  We index the gaps in the $q$ concatenated copies of
$\pgap(N)$:
$$ g_1 g_2 \ldots g_{\phi(N)} \ldots g_{q\phi(N)}.$$
Recalling that the first gap $g_1$ is the gap between the generator $1$ and the next smallest
generator in $\Z \bmod N$, the candidate generators are the 
running totals $\gamma_j = 1+\sum_{i=1}^{j-1} g_i$.  We take the $j$ for which $\gamma_j = q$, 
and removing $q$ from the list of candidate generators corresponds to replacing the gaps
$g_{j-1}$ and $g_j$ with the sum $g_{j-1}+g_j$.  This completes step R2 in the construction.

To remove the remaining multiples of $q$ from among the candidate generators, we note
that any multiples of $q$ that share a prime factor with $N$ have already been removed.
We need only consider multiples of $q$ that are relatively prime to $N$; that is, we only need
to remove $q\gamma_j$ for each generator $\gamma_j$ of $\Z \bmod N$ by closing the
corresponding gaps.

We can perform these closures by working directly with the cycle of gaps $\pgap(N)$.  Since
$q\gamma_{i+1}-q\gamma_{i} = qg_{i}$, we can go from one closure to the next by tallying the
running sum from the current closure until that running sum equals $qg_{i}$.
Technically, we create a series of indices beginning with $i_0=j$ such that $\gamma_j=q$,
and thereafter $i_k=j$ for which $\gamma_j-\gamma_{i_{k-1}}=q\cdot g_k$.  To cover the cycle
of gaps under construction, which consists initially of $q$ copies of $\pgap(N)$, $k$ runs
only from $0$ to $\phi(N)$.  We note that the last interval wraps around the end of the cycle
and back to $i_0$:  $i_{\phi(N)}=i_0$.
\end{itemize}
\end{proof}

\begin{theorem}\label{ThmqN}
In step R3 of Lemma~\ref{LemmaCycle}, each possible closure in $\pgap(N)$ occurs exactly once
in constructing $\pgap(qN)$.
\end{theorem}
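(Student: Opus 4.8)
The plan is to match each closure performed while constructing $\pgap(qN)$ with the candidate generator of $\Z \bmod qN$ that it deletes, and then to observe that reducing these candidates modulo $N$ turns this correspondence into multiplication by $q$ on the units of $\Z \bmod N$, which is a bijection because $\gcd(q,N)=1$.

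First I would fix notation. List the generators of $\Z \bmod N$ in increasing order, $1=\gamma_1<\gamma_2<\cdots<\gamma_{\phi(N)}=N-1$, so that $\pgap(N)=g_1 g_2\cdots g_{\phi(N)}$ with $g_m=\gamma_{m+1}-\gamma_m$ for $m<\phi(N)$ and $g_{\phi(N)}=2$. Reading indices cyclically with $g_0:=g_{\phi(N)}$, I identify the closure \emph{at $\gamma_m$} with the operation that replaces the adjacent pair $g_{m-1},g_m$ by its sum; thus there are exactly $\phi(N)$ possible closures of $\pgap(N)$, one for each $m$. In the concatenation produced by step R1, the candidate generators of $\Z \bmod qN$ are the integers $\gamma_m+jN$ with $1\le m\le\phi(N)$ and $0\le j\le q-1$, and in that concatenation the gap immediately before such a candidate is $g_{m-1}$ and the gap immediately after it is $g_m$. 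Hence deleting a candidate performs the closure at $\gamma_m$, where $\gamma_m$ is the residue of that candidate modulo $N$; and this residue, hence the closure performed, depends only on the deleted candidate and not on which closures have already been made, because a closure never moves the surviving candidates, it merely fuses the two gaps on either side of the deleted candidate into one.

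Next I would read the deletions off the proof of Lemma~\ref{LemmaCycle}. Step R2 deletes the candidate $q$; and since the running totals satisfy $q+qg_1+\cdots+qg_k=q\gamma_{k+1}$, step R3 deletes the candidates $q\gamma_2,q\gamma_3,\ldots,q\gamma_{\phi(N)}$. Together these are exactly the $\phi(N)$ integers $q\gamma$ with $\gamma$ a generator of $\Z \bmod N$ — equivalently, the multiples of $q$ that lie in $[1,qN)$ and are coprime to $N$; the remaining multiples of $q$ share a factor with $N$ and never were candidates. By the previous paragraph, deleting $q\gamma$ performs the closure at $(q\gamma)\bmod N$. Since $\gcd(q,N)=1$, the map $x\mapsto qx$ is a permutation of $(\Z/N\Z)^\times$, so as $\gamma$ runs over the generators of $\Z \bmod N$ the residue $(q\gamma)\bmod N$ runs over those same generators, each exactly once. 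Therefore the deletions made in steps R2 and R3 perform each of the $\phi(N)$ possible closures of $\pgap(N)$ exactly once: step R2 performs the closure at $q\bmod N$, and step R3 performs the remaining $\phi(N)-1$ closures.

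I expect the only delicate point to be the claim, used in the second paragraph, that each deletion unambiguously names a closure of $\pgap(N)$ even though the deletions in step R3 are made one after another, each altering two gap lengths in the working cycle. The resolution is that the set of surviving candidates only ever shrinks and is never translated, so the pair of original $\pgap(N)$-gaps that meet at any given surviving candidate is determined in advance; to double-count a closure one would need two distinct deleted candidates with the same residue modulo $N$, and there are none, since the $\phi(N)$ deleted candidates $q\gamma$ have $\phi(N)$ distinct residues. Once that is granted, the theorem is just the elementary fact that multiplication by a unit permutes $(\Z/N\Z)^\times$.
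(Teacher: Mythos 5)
Your proposal is correct and is essentially the paper's argument run in the opposite direction: the paper fixes a possible closure (a generator $\gamma$ of $\Z \bmod N$, equivalently a gap of $\pgap(N)$) and uses the invertibility of $N$ modulo $q$ to conclude that exactly one of the $q$ lifts $\gamma+jN$ is a multiple of $q$, whereas you enumerate the deleted candidates $q\gamma$ and use the invertibility of $q$ modulo $N$ to conclude that their residues hit each generator exactly once --- the same bijection between possible closures and removed multiples of $q$, established from the other end. Your explicit check that the sequential deletions in step R3 do not interfere with identifying which closure of $\pgap(N)$ a given deletion realizes is left implicit in the paper's proof and is a reasonable point to spell out.
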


\begin{proof}
Consider each gap $g$ in $\pgap(N)$.  Since $q \not| N$, $N \bmod q \neq 0$.
Under step R1 of the construction, $g$ has $q$ images.  Let the generator corresponding
to $g$ be $\gamma$.  Then the generators corresponding to the images of $g$ under step
R1 is the set:
$$\left\{ \gamma+jN \st j=0,\ldots,q-1\right\}.$$
Since $N \bmod q \neq 0$, there is exactly one $j$ for which $(\gamma+jN) \bmod q = 0$.
For this gap $g$, a closure in R2 and R3 occurs once and only once, at the image corresponding
to the indicated value of $j$.
\end{proof}

\begin{corollary}\label{qNCor}
Let $g$ be a gap.  If for the prime $q$, $q \not| g$, then 
$$ \sum \Rat{j}{g}(qN) = \sum \Rat{j}{g}(N).$$
\end{corollary}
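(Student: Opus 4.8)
The plan is to read the sum $\sum_j n_{g,j}(N)$ as a ``twin-type'' count and then follow that count through the construction of Lemma~\ref{LemmaCycle}. A driving term for $g$ of length $j$ in $\pgap(N)$ is precisely a pair $(\alpha,\alpha+g)$ of generators of $\Z\bmod N$ (with $\alpha+g$ read in $\Z\bmod N$) having exactly $j-1$ generators strictly between them, so summing over all lengths yields $\sum_j n_{g,j}(N)=S_g(N)$, where we set $S_g(N):=\#\{\alpha\in\Z\bmod N : \gcd(\alpha,N)=\gcd(\alpha+g,N)=1\}$. I also want the companion observation that, because $N$ carries the factor $2$ in our constructions, every gap of $\pgap(N)$ is even; hence $g=2$ has no driving term beyond the single gap $2$, and $n_{2,1}(N)=\sum_j n_{2,j}(N)=S_2(N)$, and the same holds for $qN$. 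With these two remarks the claim $\sum_j\Rat{j}{g}(qN)=\sum_j\Rat{j}{g}(N)$ becomes $S_g(qN)/S_2(qN)=S_g(N)/S_2(N)$, so the whole task is to see how $S_h$ transforms under $N\mapsto qN$, applied once with $h=g$ and once with $h=2$.

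The case $q\mid N$ is immediate: by Lemma~\ref{LemmaCycle}(a), $\pgap(qN)$ consists of $q$ concatenated copies of $\pgap(N)$, so every driving term — and every gap equal to $2$ — is reproduced exactly $q$ times, whence $S_h(qN)=q\,S_h(N)$ for both $h$ and the ratio is unchanged. So assume $q\nmid N$; I claim that then
\[
S_h(qN)=(q-2)\,S_h(N)\qquad\text{for every prime }q\text{ with }q\nmid h ,
\]
which applies to $h=g$ by hypothesis and to $h=2$ because the hypothesis $q\nmid g$, with $g$ even, forces $q$ odd. To prove the claim I would invoke Theorem~\ref{ThmqN}. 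After step R1 the candidate generators form $G_1=\{\beta\in\Z\bmod qN : \gcd(\beta,N)=1\}$, a set of size $q\,\phi(N)$; since $\gcd(\beta,N)=\gcd(\beta+h,N)=1$ depends only on $\beta\bmod N$ while $\beta\bmod q$ is unconstrained, the number of distance-$h$ pairs inside $G_1$ is $q\,S_h(N)$. Steps R2 and R3 together delete from $G_1$ exactly its $\phi(N)$ multiples of $q$; call this set $R$, and note that each $\delta\in R$ corresponds (Chinese Remainder Theorem) to a unit $u$ of $\Z\bmod N$ via $\delta\equiv u\pmod N$, $\delta\equiv 0\pmod q$, and that what remains, $G_1\setminus R$, is the generator set of $\Z\bmod qN$. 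Passing from $G_1$ to $G_1\setminus R$ cannot create a distance-$h$ pair, and because $q\nmid h$ no distance-$h$ pair has \emph{both} endpoints in $R$ (for $\delta\in R$ we have $\delta\pm h\not\equiv 0\pmod q$); so the loss is exactly the number of distance-$h$ pairs with one endpoint in $R$ and the other in $G_1$. Counting by the associated unit $u$, those with left endpoint in $R$ number $\#\{u\text{ a unit of }\Z\bmod N : u+h\text{ a unit of }\Z\bmod N\}=S_h(N)$, and symmetrically for those with right endpoint in $R$, for a total loss of $2\,S_h(N)$. Hence $S_h(qN)=q\,S_h(N)-2\,S_h(N)=(q-2)\,S_h(N)$.

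The corollary now follows: applying the claim with $h=g$ and with $h=2$, and using $n_{2,1}=S_2$,
\[
\sum_j\Rat{j}{g}(qN)=\frac{\sum_j n_{g,j}(qN)}{n_{2,1}(qN)}=\frac{(q-2)\,S_g(N)}{(q-2)\,S_2(N)}=\frac{S_g(N)}{n_{2,1}(N)}=\sum_j\Rat{j}{g}(N).
\]
I expect the subtlest step to be the bookkeeping of the closures in R2--R3: one must check that the net change in the number of distance-$h$ pairs is $-2\,S_h(N)$ independently of the order in which the closures are performed, and that it is precisely the hypothesis $q\nmid h$ that makes this value $2\,S_h(N)$ rather than something smaller (only then is a deleted $\delta$ always disjoint from its distance-$h$ neighbours). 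A second point requiring care is the reduction of the first paragraph — that for even $N$ the gap $g=2$ realises only its length-one driving term — since that is what allows the common factor $q-2$ to cancel between numerator and denominator. (As an independent check, $S_h(qN)=(q-2)\,S_h(N)$ also follows at once from the Chinese Remainder Theorem, the local factor at $q$ being $\#\{b\in\Z\bmod q : b\not\equiv 0,\ b+h\not\equiv 0\}=q-2$ exactly when $q\nmid h$.)
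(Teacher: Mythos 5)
Your proposal is correct, but it reaches the conclusion by a genuinely different mechanism than the paper. The paper stays inside the gap--closure framework: it takes each driving term $s$, notes that step R1 produces $q$ copies of $s$, invokes Theorem~\ref{ThmqN} to say that every possible closure in $s$ happens exactly once among those copies, observes that the $j-1$ interior closures preserve the sum while the two exterior closures destroy exactly two copies (in \emph{distinct} copies because $q \nmid g$), and concludes $\sum_j n_{g,j}(qN) = (q-2)\sum_j n_{g,j}(N)$, with the same factor for $n_{2,1}$. You instead collapse the whole sum over lengths at the outset, via the bijection between driving terms of any length and generators $\alpha$ with $\alpha+g$ also a generator, and then compute $S_h(qN)=(q-2)S_h(N)$ by counting which distance-$h$ pairs are destroyed when the $\phi(N)$ multiples of $q$ are deleted (or, as you note, in one line by the Chinese Remainder Theorem with local factor $q-2$ at $q$). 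This buys you something concrete: the paper explicitly worries that ``we do not know that the closures necessarily occur in distinct copies of $s$'' and hence cannot track lengths, whereas in your pair-count formulation the aggregation over lengths is automatic and that issue never arises; your version is also an independent arithmetic check on the closure bookkeeping. What the paper's route buys is continuity with the rest of the machinery (the same closure analysis underlies Theorem~\ref{CountThm}, where lengths do matter). Two small points of care, both of which you essentially flag: the identification $\sum_j n_{g,j}(N)=S_g(N)$ should be stated so that it covers driving terms that wrap around the cycle (which occur in Lemma~\ref{QLem} when $g\ge N$) --- it does, since the constellation starting at a given generator with sum $g$ is unique --- and the identity $n_{2,1}=S_2$, hence the clean factor $q-2$ in the denominator, genuinely requires $2\mid N$ (for odd $N$ closures $1+1$ create new $2$'s), which is satisfied in every use the paper makes of the corollary and is the same implicit assumption the paper's own proof makes when it asserts $n_{2,1}(qN)=(q-2)\,n_{2,1}(N)$.
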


\begin{proof}
Consider a driving term $s$ for $g$, of length $j$ in $\pgap(N)$.  
In constructing $\pgap(qN)$, we initially create $q$ copies of $s$.

If $q | N$, then the construction is complete.  For each driving term for $g$
in $\pgap(N)$ we have $q$ copies, and so
$ n_{g,j}(qN) = q  \cdot n_{g,j}(N).$  
Similarly $n_{2,1}(qN) = q \cdot n_{2,1}(N)$, and
$\Rat{j}{g}(qN) = \Rat{j}{g}(N).$  We have equality for each $j$ and so the result about the sum
is immediate.

If $q \not| N$, then in step R1 we create $q$ copies of $s$.
In steps R2 and R3, each of the possible closures in $s$ occurs once, distributed among the
$q$ copies of $s$.  The $j-1$ closures interior to $s$ don't change the sum, and the result
is still a driving term for $g$.  Only the two exterior closures, one at each end of $s$, change
the sum and thereby remove the copy from being a driving term for $g$.  Since $q \not| g$, 
these two exterior closures occur in separate copies of $s$.

If the condition $g < 2p_{k+1}$ applies, then each of the closures occur in a separate copy of 
$s$, and we can use the full dynamic system of Theorem~\ref{CountThm}.
For the current result we do not know that the closures necessarily occur in distinct copies of 
$s$, and so we can't be certain of the lengths of the resulting constellations.

However, we do know that of the $q$ copies of $s$, two are eliminated as driving terms and
$q-2$ remain as driving terms of various lengths.  
$$\sum_j n_{g,j}(qN) = (q-2) \sum_j n_{g,j}(N).$$
Since $n_{2,1}(qN) = (q-2)n_{2,1}(N)$, the ratios are preserved
$$\sum_j \Rat{j}{g}(qN) = \sum_j \Rat{j}{g}(N).$$
\end{proof}

\begin{corollary}\label{InfCor}
Let $g=2n$ be a gap, and let $\bar{q}$ be the largest prime factor of $g$.  Then
$$ \Rat{1}{g}(\infty) = \sum \Rat{j}{g}(\primeprod{\bar{q}}).$$
\end{corollary}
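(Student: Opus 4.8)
The plan is to chain together two facts already in hand: Corollary~\ref{qNCor}, which says the sum $\sum_j \Rat{j}{g}$ is unchanged when we multiply the modulus by a prime not dividing $g$, and Corollary~\ref{RatCor}, which identifies that sum with $\Rat{1}{g}(\infty)$ as soon as we are at a stage $p_k$ with $g < 2p_{k+1}$. Throughout, $\sum_j \Rat{j}{g}$ is a finite sum (a driving term summing to $g$ has boundedly many gaps), so there is nothing delicate about manipulating it.

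First I would fix a prime $p_k$ with $g < 2p_{k+1}$; such primes exist since the primes are unbounded. The arithmetic hinge is that $\bar q \le p_k$: since $\bar q \mid n$ we have $\bar q \le n = g/2 < p_{k+1}$, and $\bar q$ being prime forces $\bar q \le p_k$. Consequently every prime $q$ in the range $\bar q < q \le p_k$ satisfies $q \not| g$, because all prime factors of $g$ are $\le \bar q$.

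Now walk the primorials up from $\primeprod{\bar q}$ to $\primeprod{p_k}$ one prime at a time. Each step passes from $\primeprod{p}$ to $\primeprod{p'} = p' \cdot \primeprod{p}$ with $p'$ the next prime and $p' \not| \primeprod{p}$, so case (b) of Lemma~\ref{LemmaCycle} and hence Corollary~\ref{qNCor} applies; and since $\bar q < p' \le p_k$ we have $p' \not| g$, so Corollary~\ref{qNCor} gives $\sum_j \Rat{j}{g}(\primeprod{p'}) = \sum_j \Rat{j}{g}(\primeprod{p})$. Telescoping this chain,
\[ \sum_j \Rat{j}{g}(\primeprod{p_k}) = \sum_j \Rat{j}{g}(\primeprod{\bar q}). \]
Since $g < 2p_{k+1}$, Corollary~\ref{RatCor} applies at $p_k$ and gives $\Rat{1}{g}(\infty) = \sum_j \Rat{j}{g}(\primeprod{p_k})$; combining this with the display finishes the proof.

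The one point that needs care is the compatibility of the two regimes being spliced. Corollary~\ref{qNCor} controls only the \emph{sum} of the ratios: when the two exterior closures of a driving term land in the same copy, the individual lengths $j$ may be redistributed in ways we cannot pin down without the sharper hypothesis $g < 2p_{k+1}$ that Corollary~\ref{RatCor} requires. The argument survives because we apply Corollary~\ref{qNCor} only to the sum (which is immune to those redistributions) over the window $[\bar q,\,p_k]$, and we invoke Corollary~\ref{RatCor} only at the single stage $p_k$ where its hypothesis genuinely holds; the inequality $\bar q \le p_k$ is exactly what makes that window be traversed entirely by primes not dividing $g$, so no stage is left unaccounted for.
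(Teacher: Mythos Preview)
Your proof is correct and follows the same route as the paper's: invoke Corollary~\ref{qNCor} repeatedly to carry the sum $\sum_j \Rat{j}{g}$ unchanged from $\primeprod{\bar q}$ up to a stage $\primeprod{p_k}$ where the hypothesis $g < 2p_{k+1}$ of Corollary~\ref{RatCor} is met, then apply the latter. The paper's own proof is a terse two-line version of exactly this argument; your added justification that $\bar q \le p_k$ and your closing remarks on why only the \emph{sum} of the ratios needs to be tracked are welcome elaborations of what the paper leaves implicit.
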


\begin{proof}
For Eratosthenes sieve, by Corollary~\ref{RatCor} and the preceding corollary, 
for all primes $p > \bar{q}$, 
$$ \sum \Rat{j}{g}(\primeprod{p}) = \sum \Rat{j}{g}(\primeprod{\bar{q}}),$$
and we have our result.
\end{proof}

\section{Polignac's conjecture for Eratosthenes sieve}
We establish an equivalent of Polignac's conjecture for Eratosthenes sieve.

\begin{theorem}\label{PolThm}
For every $n>0$, the gap $g=2n$ occurs infinitely often in Eratosthenes sieve, and the
ratio of the number of occurrences of $g=2n$ to the number of $2$'s converges asymptotically
to 
$$ \Rat{1}{2n}(\infty) = \prod_{q>2, \; q|n} \frac{q-1}{q-2}.
$$
\end{theorem}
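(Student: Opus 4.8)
The plan is to combine Corollary~\ref{InfCor} with an explicit computation of the initial ratios $\Rat{j}{g}(\primeprod{\bar q})$, where $\bar q$ is the largest prime divisor of $g=2n$. By Corollary~\ref{InfCor} we already know $\Rat{1}{g}(\infty) = \sum_j \Rat{j}{g}(\primeprod{\bar q})$, so the theorem reduces to two things: first, that this sum is nonzero (equivalently, that $g$ actually occurs as a gap at some finite stage, hence infinitely often), and second, that the sum evaluates to $\prod_{q>2,\,q|n}\frac{q-1}{q-2}$.

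First I would set up the intermediate modulus. Let $Q$ be the product of the \emph{distinct} odd prime divisors of $n$ (equivalently the odd prime divisors of $g=2n$), and consider $\pgap(2Q)$. Building this cycle via Lemma~\ref{LemmaCycle}, starting from $\pgap(2) = 2$ and introducing each odd prime factor $q|n$ in turn, I would track the driving terms for $g$ and count them against the number of $2$'s. The key combinatorial fact I expect to need is that introducing a prime $q$ with $q|g$ behaves differently from the $q\nmid g$ case of Corollary~\ref{qNCor}: when $q|g$, the two exterior closures of a length-$j$ driving term $s$ can land in the \emph{same} copy of $s$ (since $q \mid g$ means $q\gamma$ and $q\gamma + qg$ are congruent mod the relevant spacing in a way that aligns the two ends), so instead of losing $2$ copies out of $q$ we lose only $1$, leaving $q-1$ driving terms out of $q$ against a factor of $q-2$ gained by the $2$'s — this is exactly where the factor $\frac{q-1}{q-2}$ enters. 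I would verify this alignment claim carefully and also check the base case: in $\pgap(2Q_0)$ for the smallest relevant $Q_0$, the gap $g$ does occur, so $\Rat{1}{g}$ is strictly positive there.

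Next I would pass from $2Q$ back into Eratosthenes sieve. Let $\primeprod{\bar q}$ be the primorial through $\bar q$; it differs from $2Q$ only by the primes $r \le \bar q$ with $r\nmid n$. Each such $r$ satisfies $r \nmid g$ (it is odd and doesn't divide $n$), so Corollary~\ref{qNCor} applies and gives $\sum_j \Rat{j}{g}(\primeprod{\bar q}) = \sum_j \Rat{j}{g}(2Q)$. Combining with the computation of the previous paragraph, $\sum_j \Rat{j}{g}(2Q) = \prod_{q>2,\,q|n}\frac{q-1}{q-2}$, and then Corollary~\ref{InfCor} finishes it. Since this product is a finite product of rationals each exceeding $1$, it is strictly positive, so $g$ occurs at infinitely many stages.

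The main obstacle I anticipate is the $q|g$ case of the closure bookkeeping — specifically, proving rigorously that for a driving term $s$ of $g$ the two exterior closures coincide in a single copy when $q\mid g$ (so exactly $q-1$ copies survive, of possibly varying lengths), rather than the $q-2$ of the coprime case. This requires looking at the actual positions $q\gamma_{i_k}$ of the closures from step R3 of Lemma~\ref{LemmaCycle} relative to the $q$ blocks, and showing the wrap-around index $i_{\phi(N)}=i_0$ forces the identification precisely when $q$ divides the sum $g$ of the driving term. Everything else — the base case, the coprime reduction via Corollary~\ref{qNCor}, and assembling the product — should be routine once that alignment lemma is in hand.
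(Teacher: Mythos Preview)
Your plan is correct and follows the same architecture as the paper: work at the modulus $Q$ equal to the radical of $2n$ (your $2Q$), then fill in the primes $p\le\bar q$ with $p\nmid n$ via Corollary~\ref{qNCor}, and conclude with Corollary~\ref{InfCor}. The only real difference is how the driving-term count at the modulus $Q$ is obtained. You build $\pgap(Q)$ inductively from $\pgap(2)$ and propose an auxiliary ``$q\mid g$'' analogue of Corollary~\ref{qNCor} in which $q-1$ out of $q$ copies survive. The paper instead reads off the count in one stroke (Lemma~\ref{QLem}): since $Q\mid g$, every one of the $\phi(Q)$ gaps in $\pgap(Q)$ is the first gap of a driving term for $g$, namely $g/Q$ complete laps of the cycle; so there are exactly $\phi(Q)$ driving terms in $\pgap(Q)$. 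After filling in the missing primes (each contributing a factor $p-2$) and dividing by $n_{2,1}(\primeprod{\bar q})=\prod_{2<p\le\bar q}(p-2)$, the Hardy--Littlewood product drops out.

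Your alignment lemma is true---in fact in your specific build-up one always has $N\mid g$, so the start and end generator of each driving term coincide and there is literally only one exterior closure---but the paper's direct count in $\pgap(Q)$ bypasses the closure bookkeeping you identified as the main obstacle. What your route buys is a unified ``one prime at a time'' picture: the factor $(q-1)/(q-2)$ is seen to arise exactly at the step where the prime $q\mid n$ is introduced, whereas the paper obtains the whole product $\phi(Q)/\prod_{2<q\mid Q}(q-2)$ at once.
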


We establish this result in two steps.  
First we find a stage of Eratosthenes sieve in which
the gap $g=2n$ has driving terms.  
Once we can enumerate the driving terms for $g$ 
in this initial stage of Eratosthenes sieve,
we can establish the asymptotic ratio of gaps $g=2n$ to the gaps $g=2$ as the sieve continues.

\begin{lemma}\label{QLem}
Let $g=2n$ be given.  Let $Q$ be the product of the primes dividing $2n$, including $2$,
$$Q = \prod_{q | 2n} q, \gap {\rm and} \gap n_1 = 2n / Q.$$
Finally, let $\bar{q}$ be the largest prime factor in $Q$.

Then in $\pgap(\primeprod{\bar{q}})$ the gap $g$ has driving terms, the total number 
of which satisfies
$$ \sum_j n_{g,j}(\primeprod{\bar{q}}) = \phi(Q) \cdot \prod_{p < \bar{q}, \; p\; \nmid \; Q} (p-2).$$
\end{lemma}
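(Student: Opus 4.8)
\emph{Proof proposal.} The plan is to count the driving terms for $g$ in $\pgap(\primeprod{\bar q})$ directly. In any cycle $\pgap(N)$, a driving term for $g$ is a block of consecutive gaps whose running sum carries us from one generator of $\Z \bmod N$ to a generator $g$ further along the cycle (the block wrapping around $\pgap(N)$ if $g\ge N$). Such a block is pinned down by its first generator $\gamma$, and its last generator is the residue of $\gamma+g$; conversely every $\gamma$ coprime to $N$ with $\gamma+g$ coprime to $N$ produces exactly one such block. Hence
$$\sum_j n_{g,j}(N) \;=\; \#\bigl\{\,0\le\gamma<N \;:\; \gcd(\gamma,N)=1,\ \gcd(\gamma+g,N)=1\,\bigr\}.$$
The step I would take the most care over is confirming that this identification persists in the regime $g=2n\ge\primeprod{\bar q}$ --- which does occur, for instance when $2n$ is a large power of $2$: there the block merely wraps around $\pgap(\primeprod{\bar q})$ several times, but it is still a bona fide constellation that begins and ends at a generator, so the correspondence is untouched. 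Everything past this point is routine.

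Next I would put $N=\primeprod{\bar q}=\prod_{p\le\bar q}p$, which is squarefree, and split the count by the Chinese Remainder Theorem:
$$\sum_j n_{g,j}(\primeprod{\bar q}) \;=\; \prod_{p\le\bar q}\#\bigl\{\,a \bmod p \;:\; a\not\equiv 0,\ a+g\not\equiv 0 \pmod p\,\bigr\}.$$
For a prime $p$ dividing $g=2n$ the two congruences coincide and leave $p-1$ admissible residues; for a prime $p\nmid 2n$ they forbid the two distinct residues $0$ and $-g$ and leave $p-2$. The primes dividing $2n$ are precisely the primes dividing $Q$, and each of them is $\le\bar q$, so
$$\sum_j n_{g,j}(\primeprod{\bar q}) \;=\; \Bigl(\prod_{p\mid Q}(p-1)\Bigr)\cdot\prod_{p\le\bar q,\ p\nmid Q}(p-2).$$
Since $Q$ is squarefree, $\prod_{p\mid Q}(p-1)=\phi(Q)$; and since $\bar q\mid Q$, the remaining product runs over primes $p<\bar q$. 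This is exactly the asserted formula. Moreover each factor is a positive integer --- the primes in the last product are odd, as $2\mid Q$, so each $p-2\ge 1$, while $\phi(Q)\ge 1$ --- so the total is at least $1$ and $g$ genuinely has driving terms in $\pgap(\primeprod{\bar q})$. The quantity $n_1$ plays no part in this lemma; it is recorded only for later use.

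The same count also falls out of the constructive machinery, which provides a useful check. In $\pgap(Q)$, since $Q\mid g$ we have $\gamma+g\equiv\gamma$ for every generator $\gamma$, so $\sum_j n_{g,j}(Q)=\phi(Q)$. Now list the primes $p_1<\cdots<p_r$ that are $\le\bar q$ but do not divide $Q$, and adjoin them to the modulus one at a time. At each stage the new prime divides neither the current modulus nor $g$, so the case $q\nmid N$, $q\nmid g$ in the proof of Corollary~\ref{qNCor} --- via Theorem~\ref{ThmqN}, of the $p_i$ copies of a given driving term the two end closures fall in different copies while the other $p_i-2$ survive --- multiplies $\sum_j n_{g,j}$ by $p_i-2$. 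Iterating recovers $\phi(Q)\prod_i(p_i-2)$, matching the direct computation. I would present the direct count as the proof and keep this second route as confirmation; in either version the one real delicacy is an honest accounting of long, multiply-wrapping driving terms at the initial stage.
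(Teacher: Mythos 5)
Your proof is correct, but it takes a genuinely different route from the paper. The paper never counts driving terms in $\pgap(\primeprod{\bar q})$ directly: it first observes that in $\pgap(Q)$ (where $Q \mid g$) every generator starts a driving term that wraps $n_1$ times around the cycle, giving $\sum_j n_{g,j}(Q)=\phi(Q)$, and then adjoins the primes dividing $\primeprod{\bar q}/Q$ one at a time, invoking Corollary~\ref{qNCor} (each such $p$ satisfies $p \nmid g$, so of the $p$ copies of each driving term exactly the two with an end closure are lost) to multiply the total by $p-2$ at each step --- precisely the route you relegate to a ``check.'' Your primary argument instead identifies $\sum_j n_{g,j}(N)$ with the number of generators $\gamma$ of $\Z \bmod N$ for which $\gamma+g$ is again a generator, and evaluates this for the squarefree modulus $N=\primeprod{\bar q}$ by the Chinese Remainder Theorem, with $p-1$ admissible residues at primes $p \mid g$ and $p-2$ at primes $p \nmid g$. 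That identification is sound (the partial sums of the block of consecutive gaps starting just past a generator $\gamma$ are exactly the differences $\gamma'-\gamma$ over subsequent generators $\gamma'$, wrapping as often as needed, so the block sums to $g$ precisely when $\gcd(\gamma+g,N)=1$, and $\gamma$ determines the block uniquely); it agrees with the paper's tabulated data (e.g.\ $1\cdot2\cdot3\cdot5\cdot9\cdot11=2970=1690+1280$ for $g=6$ in $\pgap(\primeprod{13})$); and you correctly flag and dispose of the one delicate point, the multiply-wrapping driving terms when $g \ge \primeprod{\bar q}$, which the paper itself also admits in its use of length-$n_1\phi(Q)$ terms in $\pgap(Q)$. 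What your route buys is a self-contained closed formula for $\sum_j n_{g,j}(N)$ for arbitrary $N$ and $g$, independent of the recursion and of Corollary~\ref{qNCor}, which would also give the corollary in the paper's Data section immediately; what the paper's route buys is economy within its own framework, since Corollary~\ref{qNCor} and the closure bookkeeping are needed anyway to pass to $\Rat{1}{g}(\infty)$ via Corollary~\ref{InfCor}, so the lemma there falls out of tools already in play.
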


\begin{proof}
By Lemma~\ref{LemmaCycle} the cycle of gaps $\pgap(2n)$ consists of $n_1$ concatenated copies
of $\pgap(Q)$.  In $\pgap(Q)$, there are $\phi(Q)$ driving terms for the gap $g=2n$.  To see this, start
at any gap in $\pgap(Q)$ and proceed through the cycle $n_1$ times.  The length of each of these
driving terms is initially $n_1 \cdot \phi(Q)$.

We now want to bring this back into Eratosthenes sieve.  

Let $Q_0=Q$, and let $p_1, \ldots, p_k$ be the prime factors of $\primeprod{\bar{q}} / Q$.
For $i=1,\ldots, k$, let $Q_i = p_i \cdot Q_{i-1}$.  In forming $\pgap(Q_i)$ from $\pgap(Q_{i-1})$,
we apply Corollary~\ref{qNCor}.
Although we don't have
enough information about the lengths of the driving terms to apply the dynamic system 
of Theorem~\ref{CountThm} for each length $j$, we do know that 
\begin{equation*}
 \sum_{j=1}^{J} n_{2n,j}(Q_i) =  (p_i - 2) \cdot \sum_{j=1}^{J} n_{2n,j}(Q_{i-1})
 \end{equation*}
Thus at $p_k$ we have
\begin{eqnarray*}
 \sum_{j=1}^{J} n_{2n,j}(Q_k) &=&  (p_k - 2) \cdot \sum_{j=1}^{J} n_{2n,j}(Q_{k-1}) \\
  & = & \left( \prod_{i=1}^k (p_i-2) \right) \sum_{j=1}^J n_{2n,j}Q_0 
= \left( \prod_{i=1}^k (p_i-2) \right) \phi(Q) 
 \end{eqnarray*}
\end{proof}

\begin{proof} {\bf of Theorem~\ref{PolThm}.}
Let $g=2n$ be given.  Let $Q$ be the product of the prime factors dividing $g$ and
let $\bar{q}$ be the largest prime factor of $g$.  By Lemma~\ref{QLem} we know that
in $\pgap(\primeprod{\bar{q}})$ there occur driving terms for $g$ if not the gap $g$ itself.
We know the total number of these driving terms is
$$ \sum_j n_{g,j}(\primeprod{\bar{q}})  
 = \phi(Q) \cdot \prod_{p < \bar{q}, \; p\; \nmid \; Q} (p-2).$$
The number of gaps $2$ in $\pgap(\primeprod{q})$ is
 $n_{2,1}(\primeprod{q}) = \prod_{2<p\le q} (p-2).$
 So for the ratios we have
\begin{eqnarray*}
\sum_j \Rat{j}{g}(\primeprod{\bar{q}}) & = &
 \sum_j n_{g,j}(\primeprod{\bar{q}}) / n_{2,1}(\primeprod{\bar{q}}) \\
 &=& \phi(Q) / \prod_{p | Q, \; p > 2} (p-2)  = \prod_{p | Q, \; p > 2} \frac{(p-1)}{(p-2)}.
 \end{eqnarray*}
 By Corollary~\ref{qNCor} and Corollary~\ref{InfCor}, we have the result
 $$ \Rat{1}{2n}(\infty) = \prod_{p | 2n, \; p > 2} \left( \frac{p-1}{p-2}\right).$$
\end{proof}

This establishes a strong analogue of Polignac's conjecture for Eratosthenes sieve.
Not only do all even numbers appear as gaps in later stages of the sieve, but they
do so in proportions that converge to specific ratios.  
Using the gap $g=2$ as the reference point since it
has no driving terms other than the gap itself, 
the gaps for other even numbers appear in ratios to $g=2$
implicit in the work of Hardy and Littlewood \cite{HL}.

\section{Data \& Observations}

To anchor the above results in data, we exhibit a few tables of data extracted from stages
of Eratosthenes sieve.  In each table, the rows are indexed by the size of the gap, and the
columns are indexed by the length of the driving terms.  So if the table is for
the cycle of gaps $\pgap(\primeprod{p})$, the $ij^{\rm th}$ entry is the
number of driving terms for the gap $g_i$ of length $j$ in $\pgap(\primeprod{p})$.

We have calculated the tables for $\pgap(\primeprod{37})$, for gaps from $2$ to $3528$ and for
lengths $j$ of driving terms from $1$ to $500$.  
These tables are quite large.  We present samples from the larger tables, which are posted
on www.primegaps.com.

Our first table shows the table of nonzero entries for $\pgap(\primeprod{13})$, for gaps $g=2,\ldots,32$.
This is the range of gaps for which the condition $g < 2p_{k+1}$ holds and thus for which
the full dynamic system of Theorem~\ref{CountThm} applies.

\renewcommand\arraystretch{0.8}
\begin{center}
\begin{tabular}{|r|rrrrrrrrr|} \hline
gap & \multicolumn{9}{c|}{ $n_{g,j}(13)$: driving terms of length $j$ in $\pgap(\primeprod{13})$ }\\ [1 ex]
$g$ & $\lil j=1$ & $\lil 2$ & $\lil 3$ & $\lil 4$ & $\lil 5$ & $\lil 6$ & $\lil 7$ & $\lil 8$ & $\lil 9$ \\ \hline 
 $\lil 2, \; 4$ & $\lil 1485$ & & & & & & & & \\
 $\lil 6$ & $\lil 1690$ & $\lil 1280$ & & & & & & & \\
 $\lil 8$ & $\lil 394$ & $\lil 902$ & $\lil 189$ & & & & & & \\
 $\lil 10$ & $\lil 438$ & $\lil 1164$ & $\lil 378$ & & & & & & \\
 $\lil 12$ & $\lil 188$ & $\lil 1276$ & $\lil 1314$ & $\lil 192$ & & & & & \\
 $\lil 14$ & $\lil 58$ & $\lil 536$ & $\lil 900$ & $\lil 288$ & & & & & \\
 $\lil 16$ & $\lil 12$ & $\lil 252$ & $\lil 750$ & $\lil 436$ & $\lil 35$ & & & &  \\
 $\lil 18$ & $\lil 8$ & $\lil 256$ & $\lil 1224$ & $\lil 1272$ & $\lil 210$ & & & & \\
 $\lil 20$ &  & $\lil 24$ & $\lil 348$ & $\lil 960$ & $\lil 600$ & $\lil 48$ & & & \\
 $\lil 22$ & $\lil 2$ & $\lil 48$ & $\lil 312$ & $\lil 784$ & $\lil 504$ & & & & \\
 $\lil 24$ &  & $\lil 20$ & $\lil 258$ & $\lil 928$ & $\lil 1260$ & $\lil 504$ & & & \\
 $\lil 26$ & & $\lil 2$ & $\lil 40$ & $\lil 322$ & $\lil 724$ & $\lil 448$ & $\lil 84$ & & \\
 $\lil 28$ & &  & $\lil 36$ & $\lil 344$ & $\lil 794$ & $\lil 528$ & $\lil 80$ & & \\
 $\lil 30$ &  &  & $\lil 10$ & $\lil 194$ & $\lil 1066$ & $\lil 1784$ & $\lil 816$ & $\lil 90$ & \\
 $\lil 32$ &  &  &  & $\lil 12$ & $\lil 200$ & $\lil 558$ & $\lil 523$ & $\lil 172$ & $\lil 20$ \\ \hline
 \end{tabular}
 \end{center}
\renewcommand\arraystretch{1}

These results may lend some insight into the Jacobsthal function \cite{Ejac}.  
The Jacobsthal function ${\mathbf g}(N)$ is defined as
the least integer such that for any ${\mathbf g}(N)$ consecutive integers there is at least one which is
relatively prime to $N$.  We observe that 
this is equivalent to defining ${\mathbf g}(N)$ to be the maximum gap in $\pgap(N)$, and by Lemma~\ref{LemmaCycle} ${\mathbf g}(N)={\mathbf g}(Q)$, 
in which $Q$ is the product of the prime factors of $N$.
From Lemma~\ref{LemmaCycle} and Theorem~\ref{ThmqN}, letting ${\bar q}$ be the 
maximum prime in $Q$, we know that ${\mathbf g}(Q) \le {\mathbf g}(\primeprod{\bar q})$.

From our tabulated data, it appears that the
maximum gap that actually occurs in $\pgap(\primeprod{p_k})$ is roughly $2p_{k-1}$.  We
know from previous work \cite{FBHgaps} that the gap $g=2p_{k-1}$ always occurs
in $\pgap(\primeprod{p_k})$.  Although this gap is sometimes exceeded as the maximum gap,
the tables suggest that this value is often the maximum gap.

\begin{center}
\begin{tabular}{|rr|rr|rr|rr|}
\hline
\multicolumn{8}{|c|}{Maximum gap size occurring in $\pgap(\primeprod{p})$} \\
$p$ & $\max g$ & $p$ & $\max g$ & $p$ & $\max g$ & $p$ & $\max g$ \\ \hline
$3$ & $4$ & $11$ & $14$ & $19$ & $34$ & $31$ & $58$ \\
$5$ & $6$ & $13$ & $22$ & $23$ & $40$ & $37$ & $66$ \\
$7$ & $10$ & $17$ & $26$ &  $29$ & $46$ & $41$ &  $74$ \\ \hline
\end{tabular}
\end{center}

In the next table we exhibit the part of the table for $\pgap(\primeprod{31})$ at which the
driving terms through length $9$ are running out.  In this part of the table we observe
interesting patterns for the maximum gap associated with driving terms of a given length.
The driving terms of length $4$ have sums up to $90$ but none of sums $82$, $86$, or $88$.
Interestingly, although the gap $128$ is a power of $2$, in $\pgap(\primeprod{31})$ its 
driving terms span the lengths from $11$ to $27$; yet the gaps $g=126$ and $g=132$
already have driving terms of length $9$.

In each stage of Eratosthenes sieve, some copies of the driving terms of length $j$ will have
at least one interior closure, resulting in shorter driving terms at the next stage.  For this part of the table, 
$g \ge 2p_{k+1}$ and so more than one closure could occur within a single copy of a driving
term.  Letting $31=p_1$, we therefore know that a gap $g=2n$ will occur as a gap in 
$\pgap(\primeprod{p_k})$ for $k \le \min j$, the length of the shortest driving term for $g$
in $\pgap(\primeprod{31})$.

\renewcommand\arraystretch{0.8}
\begin{center}
\begin{tabular}{|r|rrrrrrrrr|rr|} \hline
gap & \multicolumn{9}{c|}{ $n_{g,j}(31)$: driving terms of length $j$ in $\pgap(\primeprod{31})$ }
 & & \\ [1 ex]
$g$ & $\lil j=1$ & $\lil 2$ & $\lil 3$ & $\lil 4$ & $\lil 5$ & $\lil 6$ & $\lil 7$ & $\lil 8$ & $\lil 9$
 & $\lil \sum \Rat{j}{g}$ & $\lil \Rat{1}{g}(\infty)$ \\ \hline 
$\lil 74$ &  &  & $\lil 1$ & $\lil 1206$ & $\lil 70194$ & $\lil 1550662$
 & $\lil 17523160$ & $\lil 113497678$ & $\lil 445136490$ & $\lil 1$ & $\lil 1.02857$ \\
$\lil 76$ &  & &  & $\lil 602$ & $\lil 32194$ & $\lil 765488$
  & $\lil 9470176$ & $\lil 68041280$ & $\lil 302507798$ & $\lil 1.0588$ & $\lil 1.0588$\\
$\lil 78$ &  & & & $\lil 292$ & $\lil 26060$ & $\lil 826426$
   & $\lil 12166908$ & $\lil 99284264$ & $\lil 489040926$ & $\lil 2.1818$  & $\lil 2.1818$ \\
$\lil 80$ &  &  &  & $\lil 2$ & $\lil 2876$ & $\lil 139926$ & $\lil 2656274$
    & $\lil 26634332$ & $\lil 159280176$  & $\lil 1.3333$  & $\lil 1.3333$\\
$\lil 82$ &  &  &  &  & $\lil 747$ & $\lil 46878$ & $\lil 1066848$
     & $\lil 12378176$ & $\lil 83484438$ & $\lil 1$ & $\lil 1.0256$ \\
$\lil 84$ & &  & & $\lil 2$ & $\lil 1012$ & $\lil 58216$ & $\lil 1485176$
      & $\lil 18772184$ & $\lil 135450260$ & $\lil 2.4$ & $\lil 2.4$ \\
$\lil 86$ & &  & & & $\lil 74$ & $\lil 4726$ & $\lil 147779$
       & $\lil 2453256$ & $\lil 23265268$ & $\lil 1$ & $\lil 1.0244$ \\
$\lil 88$ & & &  & & $\lil 2$ & $\lil 2190$ & $\lil 107182$
        & $\lil 2025910$ & $\lil 20603366$ & $\lil 1.1111$ & $\lil 1.1111$ \\
$\lil 90$ &  &  & & $\lil 8$ & $\lil 300$ & $\lil 9360$ & $\lil 195708$
         & $\lil 2829548$ & $\lil 26983182$ & $\lil 2.6667$ & $\lil 2.6667$ \\
$\lil 92$ & &  & & & $\lil 20$ & $\lil 860$ & $\lil 26854$
          & $\lil 488854$ & $\lil 5364068$ & $\lil 1.0476$ & $\lil 1.0476$ \\
$\lil 94$ &  &  & & & $\lil 16$ & $\lil 740$ & $\lil 19740$
           & $\lil 333162$ & $\lil 3684805$ & $\lil 1$ & $\lil 1.0222$ \\
$\lil 96$ & & & &  & $\lil 4$ & $\lil 242$ & $\lil 9636$
            & $\lil 249610$ & $\lil 3693782$ & $\lil 2$ & $\lil 2$ \\
$\lil 98$ &  & &  &  & & $\lil 28$ & $\lil 1482$ & $\lil 52328$
             & $\lil 968210$ & $\lil 1.2$ & $\lil 1.2$ \\
$\lil 100$ &  & & & & & $\lil 8$ & $\lil 672$
              & $\lil 26428$ & $\lil 567560$ & $\lil 1.3333$ & $\lil 1.3333$ \\
$\lil 102$ &  & &  & &  & & $\lil 78$ & $\lil 7042$
               & $\lil 249300$ & $\lil 2.133$ & $\lil 2.133$ \\
$\lil 104$ &  &  &  &  &  &  & $\lil 182$ & $\lil 6086$
                & $\lil 129016$ & $\lil 1.0909$ & $\lil 1.0909$ \\
$\lil 106$ &  & &  & & &  & $\lil 16$ & $\lil 1168$
                 & $\lil 37144$ & $\lil 1$ & $\lil 1.0196$ \\
$\lil 108$ & & &  &  &  &  & $\lil 8$ & $\lil 1244$
                  & $\lil 44334$ & $\lil 2$ & $\lil 2$ \\
$\lil 110$ &  & & & & & &  & $\lil 142$
                   & $\lil 7686$ & $\lil 1.4815$ & $\lil 1.4815$ \\
$\lil 112$ &  & &  &  &  & &  & $\lil 68$
                    & $\lil 5294$ & $\lil 1.2$ & $\lil 1.2$ \\
$\lil 114$ & &  & & & & &  & $\lil 22$
                     & $\lil 2388$ & $\lil 2.1176$ & $\lil 2.1176$ \\
$\lil 116$ & & &  & &  & & & $\lil 224$
                      & $\lil 4716$ & $\lil 1.0370$ & $\lil 1.0370$ \\
$\lil 118$ & & &  &  &  &  &  &  & $\lil 72$
                      & $\lil 1$ & $\lil 1.0175$ \\
$\lil 120$ & &  & &  &  &  &  &  & $\lil 1012$ 
                      & $\lil 2.6667$ & $\lil 2.6667$ \\
$\lil 122$ & &  & &  & &  &  &  & $\lil 70$
                      & $\lil 1$ & $\lil 1.0169$ \\
$\lil 124$ & &  & &  & &  &  &  & $\lil 28$
                      & $\lil 1.0345$ & $\lil 1.0345$ \\
$\lil 126$ & &  & &  & &  &  &  & $\lil 4$
                      & $\lil 2.4$ & $\lil 2.4$ \\
$\lil 128$ & &  & &  & &  &  &  & 
                      & $\lil 1$ & $\lil 1$ \\
$\lil 130$ & &  & &  & &  &  &  & 
                      & $\lil 1.4545$ & $\lil 1.4545$ \\
$\lil 132$ & &  & &  & &  &  &  & $\lil 2$
                      & $\lil 2.2222$ & $\lil 2.2222$ \\
\hline
 \end{tabular}
 \end{center}
\renewcommand\arraystretch{1}

From the tabled values for $\pgap(\primeprod{31})$, we see that the driving term of length $3$
for $g=74$ will advance into an actual gap in two more stages of the sieve.  Thus the maximum
gap in $\pgap(\primeprod{41})$ is at least $74$, and the maximum gap for
$\pgap(\primeprod{43})$ is at least $90$.

For $g=74, 82, 86, 94,106, 118, 122$, note that in the table for $\pgap(\primeprod{31})$
$$\sum_j \Rat{j}{g}(\primeprod{31}) \neq \Rat{1}{g}(\infty).$$
 Up through $\pgap(\primeprod{31})$
the ratio is $1$; but for each gap, we know that this ratio will jump
to equal $\Rat{1}{g}(\infty)$ in the respective $\pgap(\primeprod{\bar{q}})$.  
How does the ratio transition from $1$ to the
asymptotic value?  If we look further in the data for $\pgap(\primeprod{31})$, we see that for
the gap $g=222$, $\sum_j \Rat{j}{222}(\primeprod{31})=2$ but the asymptotic value is
$\Rat{1}{222}(\infty) = 72/35.$

These gaps $g=2n$ have maximum prime divisor $\bar{q}$ greater than the prime $p$ for
the current stage of the sieve $\pgap(\primeprod{p})$.  From Corollary~\ref{qNCor} and the
approach to proving Lemma~\ref{QLem}, we are able to establish the following.

\begin{corollary}
Let $g=2n$, and let $Q=q_1 q_2 \cdots q_k$ be the product of the distinct prime factors of $g$,
with $q_1 < q_2 < \cdots < q_k$.  Then for $\pgap(\primeprod{p})$,
$$
\sum_j \Rat{j}{g}(\primeprod{p}) = \prod_{2 < q_i \le p} \left(\frac{q_i-1}{q_i-2}\right).
$$
\end{corollary}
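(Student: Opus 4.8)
The plan is to reduce the computation at a general stage $\primeprod{p}$ to a computation in the much smaller cycle $\pgap(Q_p)$, where $Q_p$ denotes the product of those prime factors of $g$ that do not exceed $p$. This is exactly the reduction used in the proof of Lemma~\ref{QLem}, except that we stop when we have adjoined all primes up to $p$ rather than continuing to $\bar q$.

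First I would write $\primeprod{p} = Q_p \cdot \prod_{r} r$, the second product running over the primes $r \le p$ with $r \nmid g$, and construct $\pgap(\primeprod{p})$ from $\pgap(Q_p)$ by adjoining these primes $r$ one at a time. Since every such $r$ satisfies $r \nmid g$, Corollary~\ref{qNCor} applies at each step, so $\sum_j \Rat{j}{g}$ is left unchanged; hence $\sum_j \Rat{j}{g}(\primeprod{p}) = \sum_j \Rat{j}{g}(Q_p)$. This isolates all of the content of the claim in the single cycle $\pgap(Q_p)$.

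Next I would evaluate $\sum_j \Rat{j}{g}(Q_p) = \left(\sum_j n_{g,j}(Q_p)\right) / n_{2,1}(Q_p)$ directly. For the numerator: $Q_p$ is squarefree with every prime factor dividing $2n$, so $Q_p \mid g$, and therefore for any generator $\gamma$ of $\Z \bmod Q_p$ the number $\gamma+g$ is again a generator. Consequently, starting from each of the $\phi(Q_p)$ gaps of $\pgap(Q_p)$, the running sum of consecutive gaps reaches $g$ exactly (wrapping the cycle $g/Q_p$ times), producing one driving term for $g$; these account for all of them, so $\sum_j n_{g,j}(Q_p) = \phi(Q_p) = \prod_{2 < q_i \le p}(q_i-1)$. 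For the denominator I would use $n_{2,1}(\primeprod{p}) = \prod_{2 < r \le p}(r-2)$ together with the fact, recorded in the proof of Corollary~\ref{qNCor}, that adjoining a prime $r \nmid g$ multiplies $n_{2,1}$ by $r-2$; dividing out the contributions of the primes not dividing $g$ leaves $n_{2,1}(Q_p) = \prod_{2 < q_i \le p}(q_i-2)$ (alternatively one obtains this from a direct count of pairs $\gamma, \gamma+2$ coprime to $Q_p$). Dividing the two expressions yields $\sum_j \Rat{j}{g}(Q_p) = \prod_{2 < q_i \le p}\frac{q_i-1}{q_i-2}$, which is the asserted value.

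The only delicate step is the count $\sum_j n_{g,j}(Q_p) = \phi(Q_p)$: one must verify both that from every starting gap the partial sums land on $g$ exactly — which is precisely where $Q_p \mid g$ is used — and that distinct starting positions give distinct occurrences with none omitted. This is the same argument already made for $\pgap(Q)$ within the proof of Lemma~\ref{QLem}, so it should transfer verbatim; everything else is bookkeeping over the two classes of prime divisors of $\primeprod{p}$, namely the $q_i \mid g$ and the $r \nmid g$. As consistency checks, taking $p \ge \bar q$ recovers Theorem~\ref{PolThm}, and the formula reproduces the tabulated values such as $\sum_j \Rat{j}{222}(\primeprod{31}) = 2$ and $\sum_j \Rat{j}{74}(\primeprod{31}) = 1$.
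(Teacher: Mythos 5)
Your proposal is correct and follows essentially the same route as the paper's own proof: reduce to the cycle $\pgap(Q_p)$ built from the prime divisors of $g$ up to $p$ via repeated application of Corollary~\ref{qNCor}, count the $\phi(Q_p)$ driving terms exactly as in the proof of Lemma~\ref{QLem} (using $Q_p \mid g$ so each generator starts one wrap-around driving term), and divide by $n_{2,1}$. The only difference is presentational: the paper argues at the stages $p=q_j$ and notes the sum is constant in between, while you handle a general $p$ directly and make the denominator count $n_{2,1}(Q_p)=\prod_{2<q_i\le p}(q_i-2)$ explicit, which the paper leaves implicit from the proof of Theorem~\ref{PolThm}.
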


\begin{proof}
Let $p=q_j$ for one of the prime factors in $Q$.  By Corollary~\ref{qNCor} these are the only
values of $p$ at which the sum of the ratios $\sum_j \Rat{j}{g}(p)$ changes.

Let $Q_j = q_1 q_2 \cdot q_j$.
In $\pgap(\primeprod{q_j})$, $g$ behaves like a multiple of $Q_j$. 
As in the proof of Lemma~\ref{QLem}, in $\pgap(Q_j)$ each generator begins a driving term
of sum $2n$, consisting of $2n / Q_j$ complete cycles.  There are $\phi(Q_j)$ such driving
terms.  

We complete $\pgap(\primeprod{q_j})$ as before by introducing the missing prime factors.  
The other prime factors do not
divide $2n$, and so by Corollary~\ref{qNCor} the sum of the ratios is unchanged by these factors.
We have our result:
$$ \sum_j \Rat{j}{g}(\primeprod{q_j}) = \prod_{2 < q_i \le q_j} \left(\frac{q_i-1}{q_i-2}\right).
$$
\end{proof}

Once the gap $g=2n$ finally occurs in $\pgap(\primeprod{p})$, from the description 
of the dynamic system in \cite{HRsmall}, we know that the ratio $\Rat{1}{g}(\primeprod{p})$ 
converges to its asymptotic value
as quickly as 
$$a_2^k=  \prod_{q=p_1}^{p_k} \frac{q-3}{q-2}$$ 
converges to $0$.  This convergence is very slow; for
$p_1=17$ and $p_k \approx 3.01 \times 10^{15}$, $a_2^k$ is still around $0.079138$.

\section{Conclusion}
By identifying structure among the gaps in each stage of Eratosthenes sieve, we 
have been able, for a handful of conjectures about gaps between primes,
to resolve the equivalent conjectures for Eratosthenes sieve.  These results
provide evidence toward the original conjectures, to the extent that gaps
in stages of Eratosthenes sieve are indicative of gaps among primes themselves.

In \cite{FBHgaps} we established that across the stages of Eratosthenes sieve:
\begin{itemize}
\item {\em Spikes - liminf.}  In the $\ord{k}$ stage of the sieve, there are consecutive gaps
$g_{k1}$ and $g_{k2}$ such that as $k \fto \infty$, $\lim \inf g_{k2} / g_{k1} = 0$.
\item {\em Spikes - limsup.}  In the $\ord{k}$ stage of the sieve, there are consecutive gaps
$g_{k1}$ and $g_{k2}$ such that as $k \fto \infty$, $\lim \sup g_{k2} / g_{k1} = \infty$.
\item{\em Superlinear growth.} For any $n > 2$, there exists a stage $k_n$ of the sieve, such that
for all stages $k \ge k_n$, there exists a sequence of $n$ consecutive gaps 
$g_{k,i+1}, \ldots, g_{k,i+n}$
$$ g_{k,i+1} < g_{k,i+2} < \cdots < g_{k,i+n}.$$
\item{\em Superlinear decay.} For every $n > 2$, there exists a stage $k_n$ of the sieve, such that
for all stages $k \ge k_n$, there exists a sequence of $n$ consecutive gaps 
$g_{k,i+1}, \ldots, g_{k,i+n}$
$$ g_{k,i+1}  > g_{k,i+2} > \cdots > g_{k,i+n}.$$
\end{itemize}
These results provide examples that persist through all subsequent stages of Eratosthenes 
sieve and thereby provide
evidence to resolve conjectures by Erd{\"o}s and Tur{\' a}n \cite{ET}.

In this paper we have generalized the approach we have used in \cite{FBHgaps, HRest, HRsmall}
in order to establish for Eratosthenes sieve the analogue of Polignac's conjecture. 
We have shown that for every $n>0$, there is a stage $k_n$ such that for every
stage $k \ge k_n$ of the sieve there exist gaps of size $g=2n$.

Moreover, we have shown that the ratio of the number of gaps $g=2n$ to the number of gaps $2$
in the $\ord{k}$ stage of the sieve, which ratio is denoted $\Rat{1}{2n}(p_k)$, asymptotically 
approaches the ratio suggested by Hardy and Littlewood \cite{HL}:
$$ \Rat{1}{2n}(\infty) = \prod_{q>2, \; q|n} \frac{q-1}{q-2}.$$


\bibliographystyle{amsplain}

\providecommand{\bysame}{\leavevmode\hbox to3em{\hrulefill}\thinspace}
\providecommand{\MR}{\relax\ifhmode\unskip\space\fi MR }
\providecommand{\MRhref}[2]{%
  \href{http://www.ams.org/mathscinet-getitem?mr=#1}{#2}
}
\providecommand{\href}[2]{#2}

\end{document}